\newcommand{\defi}[1]{\textsf{#1}} 
\newtheorem{theorem}{Theorem}[section]
\newtheorem{lemma}[theorem]{Lemma}
\newtheorem{proposition}[theorem]{Proposition}
\theoremstyle{definition}
\theoremstyle{remark}
\newtheorem{remark}[theorem]{Remark}
\newcommand{\Adeles}{\mathbb{A}}
\newcommand{\Ltilde}{\widetilde{L}}
\newcommand{\Xtilde}{\widetilde{X}}
\newcommand{\Ctilde}{\widetilde{C}}
\newcommand{\fp}{{\mathfrak{p}}}
\newcommand{\fP}{\mathfrak{P}}
\newcommand{\fc}{\mathfrak{c}}
\newcommand{\sO}{\mathcal{O}}
\newcommand{\Otilde}{\widetilde{\sO}}
\newcommand{\Z}{\mathbb{Z}}
\newcommand{\F}{\mathbb{F}}
\newcommand{\R}{\mathbb{R}}
\newcommand{\RR}{\mathbb{R}}
\newcommand{\C}{\mathbb{C}}
\newcommand{\CC}{\mathbb{C}}
\newcommand{\Q}{\mathbb{Q}}
\newcommand{\QQ}{\mathbb{Q}}
\newcommand{\calO}{\mathcal{O}}
\newcommand{\scriptF}{\mathscr{F}}
\newcommand{\OO}{\mathscr{O}}
\newcommand{\Directsum}{\bigoplus}
\newcommand{\isom}{\simeq}
\newcommand{\intersect}{\cap}
\newcommand{\injects}{\hookrightarrow}
\newcommand{\surjects}{\twoheadrightarrow}
\newcommand{\To}{\longrightarrow}
\DeclareMathOperator{\Char}{char}
\DeclareMathOperator{\Disc}{Disc}
\DeclareMathOperator{\Div}{Div}
\DeclareMathOperator{\Divhat}{\widehat{Div}}
\DeclareMathOperator{\Frac}{Frac}
\DeclareMathOperator{\im}{im}
\DeclareMathOperator{\nonarch}{nonarch}
\DeclareMathOperator{\Pic}{Pic}
\DeclareMathOperator{\Pichat}{\widehat{Pic}}
\DeclareMathOperator{\Spec}{Spec}
\DeclareMathOperator{\Tr}{Tr}
\DeclareMathOperator{\vol}{vol}
\begin{document}

\title{The analytic class number formula for $1$-dimensional affine schemes}
\subjclass[2010]{Primary 11R54; Secondary 11R29}
\author{Bruce~W.~Jordan}
\address{Department of Mathematics, Baruch College, The City University
of New York, One Bernard Baruch Way, New York, NY 10010-5526, USA}
\email{bruce.jordan@baruch.cuny.edu}

\author{Bjorn~Poonen}
\address{Department of Mathematics, Massachusetts Institute of Technology, Cambridge, MA 02139-4307, USA}
\email{poonen@math.mit.edu}
\urladdr{\url{http://math.mit.edu/~poonen/}}

\thanks{The second author was supported in part by National Science Foundation grant DMS-1069236 and DMS-1601946 and grants from the Simons Foundation (\#340694, \#402472, and \#550033).}

\date{December 30, 2019}

\begin{abstract}
We derive an analytic class number formula
valid for an order in a product of $S$-integers in global fields,
or equivalently for reduced finite-type affine schemes of pure dimension~$1$ 
over $\Z$.
\end{abstract}

\maketitle

\section{Introduction}
\label{S:introduction}

Let $K$ be a finite extension of $\Q$.
Let $\calO_K$ be its ring of integers.
Let $\zeta_K(s)$ be the Dedekind zeta function of $K$,
which is the zeta function of $\Spec \calO_K$.
Dedekind \cite{Dirichlet1894}*{Supplement~XI, \S184,~IV},
generalizing work of Dirichlet,
proved the \defi{analytic class number formula},
which expresses the residue of $\zeta_K(s)$ at $s=1$
in terms of arithmetic invariants 
(see also Hilbert's \emph{Zahlbericht} \cite{Hilbert1897}*{Theorem~56}).
More precisely, he proved that
\begin{equation}
\label{E:classical analytic class number formula}
	\lim_{s \to 1} (s-1) \zeta_K(s) 
	= \frac{ 2^{r_1} \, (2\pi)^{r_2} \, hR}
		{w \, \lvert \Disc K \rvert^{1/2}},
\end{equation}
where $r_1$ is the number of real places, 
$r_2$ is the number of complex places,
$h$ is the class number,
$R$ is the unit regulator,
$w$ is the number of roots of unity,
and $\Disc K$ is the discriminant.
F.~K.~Schmidt~\cite{Schmidt1931}*{Satz~21} 
proved an analogue for a global function field.

This could be generalized in several ways:
\begin{itemize}
\item Replace $\calO_K$ by a non-maximal order.
\item Replace $\calO_K$ by a ring of $S$-integers 
for some finite set $S$ of places of $K$.
\item Allow $\Z$-algebras of Krull dimension~$1$ that are not
necessarily integral domains.
\end{itemize}
We will generalize simultaneously in all of these directions,
by proving a version of \eqref{E:classical analytic class number formula}
for an order $\calO$ in a product of $S$-integers in global fields,
or equivalently for 
a reduced affine finite-type $\Z$-scheme of pure dimension~$1$
(for the equivalence, see Proposition~\ref{P:characterization of rings}).
Our main result, 
expressing the leading term of the arithmetic zeta function 
of \cite{Serre1965}*{p.~83}
in terms of quantities to be defined in Section~\ref{S:invariants}, 
is this:

\begin{theorem}[Generalized analytic class number formula]
\label{T:main}
Let $X$ be a reduced affine finite-type $\Z$-scheme of pure dimension~$1$,
say $X = \Spec \sO$.
Let $m$ be the number of irreducible components of $X$.
Then
\begin{equation}
\label{E:sweet potato}
	\lim_{s\rightarrow 1} (s-1)^m \zeta_X(s)
	= \frac{ 2^{r_1} \, (2\pi)^{r_2} \, h(\sO) \, R(\sO) 
			\prod_{v \in S_{\nonarch}} \left( \left( 1-q_v^{-1} \right)/ \log q_v \right)}
		{w(\sO) \, \lvert \Disc \sO \rvert^{1/2} }.
\end{equation}
\end{theorem}

\begin{remark}
Theorem~\ref{T:main} seems to be new even 
for orders in rings of integers of number fields
and for rings of $S$-integers in number fields.
\end{remark}

\begin{remark}
Replacing a finite-type $\Z$-scheme $X$ 
by its associated reduced subscheme $X_{\textup{red}}$ 
does not change $\zeta_X(s)$.
On the other hand,
if the scheme $X = \Spec \sO$ in Theorem~\ref{T:main} 
is not assumed to be reduced, then $\sO^\times$ need not be
finitely generated, so defining the regulator $R(\sO)$ is problematic:
for example, if $\sO \colonequals \F_2[t,\epsilon]/(\epsilon^2)$,
then $\sO^\times$ is isomorphic to the additive group of $\F_2[t]$
via the homomorphism $1+f\epsilon \mapsto f$.
\end{remark}

\begin{remark}
Various authors defined other zeta functions attached to a singular curve
over a finite field and computed their leading terms at $s=0$ or $s=1$ 
\cites{Galkin1973,Green1989,Zuniga-Galindo1997a,Zuniga-Galindo1997b,Stoehr1998}, 
but these zeta functions are different from
the usual zeta function in~\cite{Serre1965} in general.
\end{remark}

\subsection{Outline of the article.}
Section~\ref{S:Tate} proves Theorem~\ref{T:main} for a ring of $S$-integers.
Because we want a formula involving the 
$S$-class group and $S$-regulator instead of 
the usual class group and regulator,
it is not convenient to deduce this from the classical formula
for the ring of integers.
Instead we redo the calculations of Tate's thesis for $S$-integers.

Sections \ref{S:characterizing} and~\ref{S:describing}
characterize the rings $\sO$ such that 
$\Spec \sO$ is a reduced affine finite-type $\Z$-scheme of pure dimension~$1$.
In particular, the normalization $\Otilde$ of such a ring $\sO$
is a product of rings of $S$-integers.

Section~\ref{S:invariants} defines all the quantities that appear
in~\eqref{E:sweet potato}.

Section~\ref{S:nonmaximal} proves Theorem~\ref{T:main}.
The formula for $\Otilde$ follows from the case proved in Section~\ref{S:Tate},
so our strategy is to determine how each term in \eqref{E:sweet potato}
changes when $\sO$ is replaced by $\Otilde$.
In particular, we use the Leray spectral sequence to determine
how the unit group and Picard group change.

Finally Sections \ref{S:example 1} and~\ref{S:example 2} 
illustrate \eqref{E:sweet potato} 
in examples exhibiting the phenomena that can arise in our context: 
both number fields and function fields, 
$S$-integers instead of just integers,
multiple irreducible components, 
and non-maximal orders (and hence singular points of the scheme).

\section{Tate's thesis for \texorpdfstring{$S$}{S}-integers}
\label{S:Tate}

Let $K$ be a global field.
Let $\mu$ be the torsion subgroup of $K^\times$, and let $w = \#\mu$.
For each place $v$ of $K$, let $K_v$ be the completion of $K$ at $v$.
If $K_v \isom \R$, equip it with Lebesgue measure.
If $K_v \isom \C$, equip it with $2$ times Lebesgue measure.
For each nonarchimedean $v$, let $\sO_v$ be the valuation ring in $K_v$, 
let $q_v$ be the size of its residue field,
and equip $K_v$ with the Haar measure $dx$ for which $\vol(\sO_v)=1$; 
here we follow \cite{Weil1967}*{p.~95} instead of taking the self-dual
measure as in \cite{Tate1967-thesis}*{p.~310}.
We write $\vol(T)$ for the measure of a set $T$ with respect to a measure
that is implied by context.

If $a \in K_v$ for some $v$, let $|a|_v \in \R_{\ge 0}$ be the factor by which
multiplication-by-$a$ scales the Haar measure on $K_v$.
The measure we use on $K_v^\times$ is not the restriction of 
the measure $dx$ on $K_v$.
If $v$ is archimedean, equip $K_v^\times$ with the Haar measure $dx/|x|_v$.
If $v$ is nonarchimedean, equip $K_v^\times$ with the Haar measure 
for which $\vol(\sO_v^\times)=1$.

Let $K_{v,1}^\times \colonequals \{x \in K_v^\times : |x|_v=1\}$.
If $v$ is nonarchimedean, $K_{v,1}^\times = \sO_v^\times$,
which has volume $1$ for the Haar measure on $K_v^\times$.
If $v$ is archimedean, then equip $K_{v,1}^\times$ with the Haar measure
compatible with the Haar measure on $K_v^\times$ and Lebesgue measure on $\R$
in the exact sequence
\[
	1 \To K_{v,1}^\times \To K_v^\times \stackrel{\log |\;|_v}\To \R \To 0
\]
(for the notion of compatibility, see, e.g., \cite{Deitmar-Echterhoff2014}*{Theorem~1.53}; the notion can be extended to exact sequences of arbitrary finite length by breaking them into short exact sequences).

Define the \defi{ad\`ele ring} $\Adeles$ 
as the restricted product ${\prod}'_v (K_v,\sO_v)$
with the product measure.
Equip the \defi{id\`ele group} 
$\Adeles^\times = {\prod}'_v (K_v^\times,\sO_v^\times)$
with the product of the multiplicative Haar measures.
The field $K$ embeds diagonally in $\Adeles$,
and $K^\times$ embeds diagonally in $\Adeles^\times$.
Equip the discrete groups $K$ and $K^\times$ and their subgroups 
with the counting measure,
in order to equip $\Adeles/K$ and $\Adeles^\times/K^\times$ with measures.

Let $S$ be a finite nonempty set of places of $K$ 
containing all the archimedean places.
Let $S_{\nonarch}$ be the set of nonarchimedean places in $S$.
Define the \defi{ring of $S$-integers} by 
\[
	\sO \colonequals \{x \in K : v(x) \ge 0 \textup{ for all $v \notin S$} \}.
\]

\begin{lemma}
\label{L:adele quotient}
We have a measure-compatible isomorphism
\[
	\frac{\prod_{v \in S} K_v \times \prod_{v \notin S} \sO_v}{\sO} 
	\stackrel{\sim}\To \frac{\Adeles}{K}
\]
and a measure-compatible short exact sequence
\[
	0 
	\To \prod_{v \notin S} \sO_v 
	\To \frac{\prod_{v \in S} K_v \times \prod_{v \notin S} \sO_v}{\sO} 
	\To \frac{\prod_{v \in S} K_v}{\sO}
	\To 0.
\]
\end{lemma}

\begin{proof}
Since $S$ is nonempty,
strong approximation \cite{Cassels1967}*{\S15} 
implies that any $x \in \Adeles$ can be written as $y+\epsilon$
with $y \in K$ and $\epsilon = (\epsilon_v) \in \Adeles$
such that $\epsilon_v \in \sO_v$ for all $v \notin S$.
In other words, the homomorphism
\[
	\prod_{v \in S} K_v \times \prod_{v \notin S} \sO_v \To \frac{\Adeles}{K}
\]
is surjective.
Its kernel is $K \intersect \left( \prod_{v \in S} K_v \times \prod_{v \notin S} \sO_v \right) = \sO$, 
so we obtain the claimed isomorphism.
By definition of the measures, the upper three homomorphisms in the diagram
\[
\xymatrix{
	\prod_{v \in S} K_v \times \prod_{v \notin S} \sO_v \ar@{^{(}->}[r] \ar[d] & \Adeles \ar[d] \\
	\dfrac{\prod_{v \in S} K_v \times \prod_{v \notin S} \sO_v}{\sO} \ar[r]^-{\sim} & \dfrac{\Adeles}{K}
}
\]
respect the measures, so the induced isomorphism at the bottom does too.

The exact sequence is obtained from the measure-compatible split exact sequence
\[
	0 
	\To \prod_{v \notin S} \sO_v 
	\To \prod_{v \in S} K_v \times \prod_{v \notin S} \sO_v
	\To \prod_{v \in S} K_v
	\To 0
\]
by dividing each of the last two terms by the image of $\sO$ 
with the counting measure.
\end{proof}

Define the \defi{$S$-Arakelov divisor group} 
(cf.\ the usual Arakelov divisor group in \cite{Neukirch1999}*{III.1.8})
by 
\[
	\Divhat \sO \colonequals \Directsum_{v \in S} \R \times \Directsum_{v \notin S} (\Z \log q_v),
\]
where each $\R$ has Lebesgue measure
and each $\Z \log q_v$ has the counting measure.
We have a homomorphism $\deg \colon \Divhat \sO \to \R$
that sums the components,
and its kernel is denoted $\Divhat^0 \sO$,
which acquires a measure compatible with the sequence 
$0 \to \Divhat^0 \sO \to \Divhat \sO \to \R \to 0$.
The homomorphism $\Adeles^\times \to \Divhat \sO$
sending $(x_v)$ to $(\log |x_v|_v)$
restricts to a homomorphism $K^\times \to \Divhat^0 \sO$
whose cokernel is called 
the \defi{$S$-Arakelov class group} $\Pichat^0 \sO$.
Let $\Adeles^\times_1$ be the kernel of the composition 
$\Adeles^\times \to \Divhat \sO \stackrel{\deg}\to \R$.
Equip $\R^S \colonequals \Directsum_{v \in S} \R$ and $\R$
with Lebesgue measure.
Equip the sum-zero hyperplane 
$\R^S_0 \colonequals \ker\left(\R^S \stackrel{\textup{sum}}\To \R\right)$
with the measure compatible with those
(equivalently, identify $\R^S_0$ with its projection
under the forget-one-coordinate map and use Lebesgue measure on the image).

\begin{lemma}
\label{L:Pic-hat}
We have a measure-preserving exact sequence
\[
	0
	\To \frac{\R^S_0} {\im \sO^\times}
	\To \Pichat^0 \sO
	\To \Pic \sO
	\To 1.
\]
\end{lemma}

\begin{proof}
Apply the snake lemma to
\[
\begin{gathered}[b]
\xymatrix{
1 \ar[r] & \sO^\times \ar[r] \ar[d]
	& K^\times \ar[r] \ar[d]
	& K^\times/\sO^\times \ar[r] \ar[d]
	& 1 \\
0 \ar[r] & \R^S_0 \ar[r] 
	&\Divhat^0 \sO \ar[r]
	& \Div \sO \ar[r] 
	& 0\lefteqn{.}
}\\[-\dp\strutbox]
\end{gathered}
\qedhere
\]
\end{proof}

If $\Char K>0$, let $q$ be the size of the constant field of $K$.
In all the formulas below, the term in square brackets 
involving $\log q$ should be present only if $\Char K > 0$.
Each copy of $\R$ has Lebesgue measure, 
which induces a measure on quotients such as $\R/(\Z \log q)$.

\begin{lemma}
\label{L:idele class group}
We have a measure-compatible exact sequence
\[
	1 
	\To \frac{\prod_v K_{v,1}^\times}{\mu}
	\To \frac{\Adeles^\times_1}{K^\times}
	\To \Pichat^0 \sO
	\To \Directsum_{v \in S_{\nonarch}} \frac{\R}{\Z \log q_v}
	\To \left[ \frac{\R}{\Z \log q} \right]
	\To 0.
\]
\end{lemma}

\begin{proof}
Since $S$ contains all archimedean places,
we have a commutative diagram 
\[
\xymatrix{
   0 \ar[r] 
      & \displaystyle \Directsum_{\textup{$v$ arch}} \R \times \!\!\!\! \Directsum_{\textup{$v$ nonarch}} \Z \log q_v \ar[r] \ar[d]^-{\textup{sum}}
      & \displaystyle \Directsum_{v \in S} \R \times \Directsum_{v \notin S} \Z \log q_v \ar[r] \ar[d]^-{\textup{sum}}
      & \displaystyle \Directsum_{v \in S_{\nonarch}} \frac{\R}{\Z \log q_v} \ar[r] \ar[d] 
      & 0 \\
   0 \ar[r] 
      & \R \ar@{=}[r] 
      & \R \ar[r]
      & 0
}
\]
with exact measure-compatible rows.
The second vertical homomorphism is $\deg \colon \Divhat \sO \to \R$,
so it is surjective with kernel $\Divhat^0 \sO$.
The upper left group is 
$\im\left(\Adeles^\times \to \Divhat \sO \right)$,
so the kernel of the first vertical homomorphism is 
$\im\left(\Adeles^\times_1 \to \Divhat^0 \sO \right)$.
If $K$ is a number field, then the first vertical homomorphism is surjective.
If $K$ is a function field, then its image is $\Z \log q$
since a smooth projective curve over $\F_q$ has closed points
of every sufficiently large degree.
Thus the snake lemma explains exactness at the last three nontrivial positions
in the exact sequence
\[
	1
	\To \prod_v K_{v,1}^\times 
	\To \Adeles^\times_1 
	\To \Divhat^0 \sO 
	\To \Directsum_{v \in S_{\nonarch}} \frac{\R}{\Z \log q_v}
	\stackrel{\textup{snake}}\To \left[ \frac{\R}{\Z \log q} \right]
	\To 0,
\]
and exactness at the first two positions follows since
$K^\times_{v,1}$ is the kernel of $\log |\;|_v \colon K_v^\times \to \R$.
The discrete subgroup $K^\times$
and compact subgroup $\prod_v K_{v,1}^\times$ of $\Adeles^\times_1$ 
intersect in $\mu$, 
so forming quotients yields the claimed exact sequence.
By construction, each of the exact sequences above is measure-compatible.
\end{proof}

If $K$ is a number field, 
let $(e_i)_{1 \le i \le n}$ be a basis for the ring of integers of $K$,
and define $\Disc K \colonequals \det (\Tr(e_i e_j))_{1 \le i,j \le n} \in \Z$.
If $K$ is a global function field of genus $g$ over $\F_q$,
define $\Disc K \colonequals q^{2g-2}$;
this is so that in both cases it is $\lvert \Disc K \rvert^{s/2}$ times
the completed zeta function below that is symmetric with respect to 
$s \mapsto 1-s$.
Define the \defi{$S$-class number} $h(\sO) \colonequals \# \Pic \sO$.
By the proof of the Dirichlet $S$-unit theorem, 
the image of $\sO^\times \to \R^S_0$ is a full lattice;
its covolume is called the \defi{$S$-regulator} $R(\sO)$.

\begin{lemma}
\label{L:volume calculations}
We have
\begin{align*}
\vol\left(\frac{\Adeles}{K}\right) 
	&= \lvert \Disc K \rvert^{1/2} \\
\vol\left(\frac{\prod_{v \in S} K_v}{\sO}\right) 
	&= \lvert \Disc K \rvert^{1/2} \\
\vol\left( K_{v,1}^\times \right) 
	&= 
	\begin{cases}
	2, & \textup{ if $K_v \isom \RR$;} \\
	2\pi, & \textup{ if $K_v \isom \CC$;} \\
	1, & \textup{ if $K_v$ is nonarchimedean.}
	\end{cases} \\
\vol\left( \Pichat^0 \sO \right) 
	&= h(\sO) \, R(\sO) \\
\vol\left( \frac{\Adeles^\times_1}{K^\times} \right) 
	&= \frac{ 2^{r_1} (2\pi)^{r_2} \, h(\sO) \, R(\sO) \, [\log q]}
		{w(\sO) \prod_{v \in S_{\nonarch}} \log q_v}.
\end{align*}
$($Again, the term in square brackets should be present only if $\Char K>0$.$)$
\end{lemma}

\begin{proof}
The first formula can be found in \cite{Weil1982}*{\S2.1.3}.
It implies the second, by Lemma~\ref{L:adele quotient}.
For $\vol(K_{v,1}^\times)$ for archimedean $v$,
see Tate's thesis \cite{Tate1967-thesis}*{p.~337}.
For nonarchimedean $v$, the group $K_{v,1}^\times=\sO_v^\times$ 
has volume~$1$ by definition of the measure.
Lemma~\ref{L:Pic-hat} computes $\vol\left( \Pichat^0 \sO \right)$.
Lemma~\ref{L:idele class group} yields the last formula.
\end{proof}

Define gamma factors $\Gamma_\R(s) \colonequals \pi^{-s/2} \Gamma(s/2)$
and $\Gamma_\C(s) \colonequals (2\pi)^{1-s} \Gamma(s)$,
and define the \defi{completed zeta function} by
\[
	\widehat{\zeta}_K(s) \colonequals \Gamma_\R(s)^{r_1} \, \Gamma_\C(s)^{r_2} \, \zeta_K(s).
\]
(Warning: 
We have used the definitions of \cite{Weil1967}*{VII.\S6}, 
but other authors use definitions of $\Gamma_\R(s)$ and $\Gamma_\C(s)$
that are nonzero constant multiples of these; 
cf.\ \cite{Deligne1973-L}*{\S3.2} and \cite{Deninger1991}*{\S2}.
Some authors also include a factor $\lvert \Disc K \rvert^{s/2}$ 
in the definition of $\widehat{\zeta}_K(s)$ \cite{Neukirch1999}*{p.~467}.)

\begin{lemma}
\label{L:completed zeta function}
We have
\[
	\lim_{s \to 1} (s-1) \widehat{\zeta}_K(s) = \frac{\vol\left( \frac{\Adeles^\times_1}{K^\times} \right)}{\lvert \Disc K \rvert^{1/2} \, [\log q]}.
\]
\end{lemma}

\begin{proof}
See the proofs of \cite{Weil1967}*{Theorems 3 and~4}.
\end{proof}

\begin{theorem}[Analytic class number formula for $S$-integers]
\label{T:ACNF for S-integers}
We have
\[
	\lim_{s \to 1} (s-1) \zeta_{\sO}(s) 
	= \frac{ 2^{r_1} \, (2\pi)^{r_2} \, h(\sO) \, R(\sO) \,
		\prod_{v \in S_{\nonarch}} 
			\left( \left( 1-q_v^{-1} \right)/\log q_v \right)}
		{w(\sO) \, \lvert \Disc K \rvert^{1/2} }.
\]
\end{theorem}

\makeatletter
\g@addto@macro\bfseries{\boldmath}
\makeatother

\begin{proof}
The functions $\widehat{\zeta}_K(s)$ and $\zeta_{\sO}(s)$
differ only in that the former contains
\begin{itemize}
\item
gamma factors $\Gamma_\R(s)$ and $\Gamma_\C(s)$,
which take the value $1$ at $s=1$,
and 
\item
Euler factors $(1-q_v^{-s})^{-1}$ for each $v \in S_{\nonarch}$,
which take the value $(1-q_v^{-1})^{-1}$ at $s=1$.
\end{itemize}
The formula follows from this, the last formula of Lemma \ref{L:volume calculations}, and Lemma~\ref{L:completed zeta function}.
\end{proof}

\section{Characterizing rings of \texorpdfstring{$S$}{S}-integers}
\label{S:characterizing}

\begin{lemma}
\label{L:characterization of S-integers}
Let $\sO$ be an integrally closed domain 
that is finitely generated as a $\Z$-algebra.
Let $K=\Frac \sO$.
If the Krull dimension $\dim \sO$ is $1$,
then $K$ is a global field and $\sO$ is a ring of $S$-integers in $K$
in the sense of Section~\textup{\ref{S:Tate}}.
\end{lemma}

\begin{proof}
\emph{Case 1: The image of $\Spec \sO \to \Spec \Z$ is a closed point.}
Then $\sO$ is a $1$-dimensional algebra over $\F_p$,
so $\Spec \sO$ is a regular curve, equal to $C-S$,
where $C$ is the smooth projective curve with function field $K$,
and $S$ is a nonempty finite set of places.

\emph{Case 2: The morphism $\Spec \sO \to \Spec \Z$ is dominant.}
Then it is of relative dimension $0$ since $\dim \sO = 1 = \dim \Z$.
Thus $K$ is a finite extension of $\Q$.
Since $\sO$ is integrally closed, 
$\sO$ contains the integral closure $\sO_K$ of $\Z$ in $K$.
Thus $\sO = \sO_K[S^{-1}]$ for some set $S$ of places of $K$.
Since $\sO$ is finitely generated, $S$ is finite.
\end{proof}

\section{Describing \texorpdfstring{$1$}{1}-dimensional schemes}
\label{S:describing}

{}From now on, 
$X$ is a reduced affine finite-type $\Z$-scheme of pure dimension~$1$,
say $X = \Spec \sO$.
Let $(C_i)_{i \in I}$ be the $1$-dimensional irreducible components of $X$.
Let $K_i$ be the function field of $C_i$.
Let $K$ be the total quotient ring of $\sO$, 
obtained by inverting all non-zerodivisors,
so $K = \prod K_i$.

Let $|X|$ be the set of closed points of $X$.
These points correspond to maximal ideals of $\sO$,
which are exactly the prime ideals with finite residue field
since $\sO$ is 
a finitely generated $\Z$-algebra \cite{EGA-IV.III}*{10.4.11.1(i)}.

Let $\pi \colon \Xtilde \to X$ 
and $\pi_i \colon \Ctilde_i \to C_i$ be the normalization morphisms.
Then $\Xtilde$ is the \emph{disjoint} union $\coprod \Ctilde_i$.
Correspondingly, the integral closure $\Otilde$ of $\sO$ in $K$
is a finite product of rings $\Otilde_i$.
By Lemma~\ref{L:characterization of S-integers}, 
$K_i$ is a global field
and there exists $S_i$ such that 
$\Otilde_i$ is the ring of $S_i$-integers in $K_i$.
Since $\pi$ is an isomorphism above the generic point of each $C_i$, 
it is an isomorphism above $X-Z$ for some finite subset $Z \subset |X|$.

\begin{lemma}
\label{L:order is of finite index}
The index $(\Otilde:\sO)$ is finite.
\end{lemma}

\begin{proof}
Since $\sO$ is a finitely generated $\Z$-algebra,
$\Otilde$ is a finite $\sO$-module.
The finite $\sO$-module $\Otilde/\sO$ is supported on $Z$,
and each $\fp \in Z$ has finite residue field.
Thus $\Otilde/\sO$ has a filtration with quotients
that are finite as sets, so $\Otilde/\sO$ is finite as a set.
\end{proof}

Our work so far proves the following.

\begin{proposition}
\label{P:characterization of rings}
A scheme $X$ is a reduced affine finite-type $\Z$-scheme of pure dimension~$1$
if and only if 
$X=\Spec \sO$ for some finite-index subring $\sO$ 
of a finite product of rings of $S_i$-integers in global fields $K_i$.
\end{proposition}

\section{Invariants}
\label{S:invariants}

We retain the notation of Section~\ref{S:describing}.

\subsection{The invariants \texorpdfstring{$m$}{m}, \texorpdfstring{$r_1$}{r1}, \texorpdfstring{$r_2$}{r2} of \texorpdfstring{$K$}{K}}

Let $m=m(K)$ be the number of irreducible components of $X$,
so $m = \# I$.

Define $r_1=r_1(K)$ and $r_2=r_2(K)$
so that $r_1$ is the number of ring homomorphisms $K \rightarrow \RR$,
and $2r_2$ is the number of ring homomorphisms $K \rightarrow \CC$
whose image is not contained in $\RR$.
If $K$ is a number field, these are the usual $r_1$ and $r_2$.
If $K$ is a global function field, then $r_1=r_2=0$.
In the general case $K = \prod K_i$,
we have $r_1(K) = \sum r_1(K_i)$ and $r_2(K) = \sum r_2(K_i)$.

\subsection{The unit group \texorpdfstring{$\sO^\times$}{Ox} and roots of unity}

Let $\sO^\times$ be the unit group of $\sO$.
Later we will prove that $\sO^\times$ is 
a finitely generated abelian group.
Let $\mu(\sO)$ be the torsion subgroup of $\sO^\times$.
Let $w(\sO):=\#\mu(\sO)$.

\subsection{The Picard group \texorpdfstring{$\Pic \sO$}{Pic O} and the class number \texorpdfstring{$h(\sO)$}{h(O)}}

Let $\Pic \sO := \Pic X = H^1(X,\OO_X^{\ast})$ \cite{Hartshorne1977}*{Exercise~III.4.5}.
Later we will prove that $\Pic \sO$ is finite.
Let $h(\sO) :=  \# \Pic \sO$.

\subsection{The discriminant \texorpdfstring{$\Disc \sO$}{Disc O}}

For each global field $K_i$, we defined $\Disc K_i$ in Section~\ref{S:Tate}.
Define $\Disc \sO \colonequals (\Otilde:\sO)^2 \, \prod \Disc K_i$;
this is so that in the case where $\sO$ is an order in the ring of integers 
of a number field, 
$\Disc \sO = \det \; (\Tr_{K/\QQ}(e_i e_j))_{1 \le i,j \le n}$
for any $\Z$-basis $(e_i)$ of $\sO$.

\subsection{The logarithmic embedding and the regulator \texorpdfstring{$R(\sO)$}{R(O)}}
\label{S:logarithmic embedding}

Let $S = \coprod S_i$ and $S_{\nonarch} = \coprod (S_i)_{\nonarch}$.
For $v \in S_i \subseteq S$, let $K_v$ be the completion of $K_i$ at $v$.
Taking the product of the homomorphisms 
$K_v^\times \stackrel{\log |\;|_v}\To \RR$
yields a homomorphism $\prod_{v \in S} K_v^\times \stackrel{\lambda}\To \RR^S$.
Let $\widetilde{\phi}$ be the composition
\[
      \Otilde^\times \To \prod_{v \in S} K_v^\times \stackrel{\lambda}\To \RR^S.
\]
Let $\phi = \widetilde{\phi}|_{\sO^\times}$.
Since $\ker \lambda$ is bounded in $\prod_{v\in S} K_v$
while $\sO^\times$ is a discrete closed subset of $\prod_{v\in S} K_v$,
$\ker \phi$ is finite;
on the other hand, the codomain of $\phi$ is torsion-free;
thus $\ker \phi = \mu(\sO)$.

The group $\widetilde{\phi}(\Otilde^\times)$ 
is a direct product of lattices in $\prod \RR^{S_i}_0$.
Later we will prove that $\sO^\times$ is of finite index in $\Otilde^\times$,
so $\phi(\sO^\times)$ is again 
a full lattice $L(\sO)$ in $\prod \RR^{S_i}_0$.
The covolume of $L(\sO)$ is called the \defi{regulator}, $R(\sO)$.

\subsection{The zeta function}

Since $\Spec \sO$ is of finite type over $\Z$,
it has a zeta function defined as an Euler product, 
as in~\cite{Serre1965}*{p.~83}:
\[
      \zeta_X(s) \colonequals \prod_{\fp \in |X|} \left(1-q_\fp^{-s}\right)^{-1}.
\]
The product converges only for $s \in \C$ with sufficiently large real part,
but as is well known and as we will explain, $\zeta_{\sO}(s)$ admits
a meromorphic continuation to the whole complex plane
and has a pole at $s=1$ of order $m$.

We have now defined all the quantities appearing in Theorem~\ref{T:main}.

\section{Relating the invariants of \texorpdfstring{$\sO$ and $\Otilde$}{O and O-tilde}}
\label{S:nonmaximal}

Theorem~\ref{T:main} for a product of rings of $S$-integers
follows from Theorem~\ref{T:ACNF for S-integers}.
In particular, it holds for $\Otilde$.
To prove it for $\sO$, 
we compare the formulas for $\sO$ and $\Otilde$ term by term.

For maximal ideals $\fp \subseteq \sO$ and $\fP \subseteq \Otilde$,
we write $\fP | \fp$ 
when $\pi$ maps the closed point $\fP \in \Xtilde$ to $\fp \in X$.

\subsection{The zeta functions of \texorpdfstring{$\sO$ and $\Otilde$}{O and O-tilde}}

Hecke \cite{Hecke1917}, generalizing Riemann's work,
proved that the Dedekind zeta function of a number field
has a meromorphic continuation to the entire complex plane
and has a simple pole at $s=1$.
The analogous result for global function fields was proved 
by F.~K.~Schmidt \cite{Schmidt1931}.
These imply the analogue for a ring of $S$-integers in a global field.
Taking a product yields the corresponding result of products
of $m$ rings of $S$-integers, except that now the pole has order $m$;
this applies in particular to $\Otilde$.
Next, by definition,
\[
      \frac{\zeta_{\Otilde}(s)}{\zeta_{\sO}(s)}
      = \prod_\fp \frac{ \prod_{\fP | \fp} (1-q_\fP^{-s})^{-1}}{(1-q_\fp^{-s})^{-1}},
\]
where, for all but finitely many $\fp$, the fraction on the right is $1$;
cf.~\cite{Jenner1969}*{Theorem}.
Thus $\zeta_{\sO}(s)$ too is meromorphic with a pole of order $m$ at $1$,
and we deduce the following.

\begin{proposition}
\label{P:ratio of zeta}
We have
\[
      \lim_{s \rightarrow 1} \frac{\zeta_{\Otilde}(s)}{\zeta_{\sO}(s)} = \prod_\fp \frac{ \prod_{\fP | \fp} (1-q_\fP^{-1})^{-1}}{\left(1-q_\fp^{-1}\right)^{-1}}.
\]
\end{proposition}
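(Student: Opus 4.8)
The plan is to compare the Euler products for $\zeta_{\Otilde}(s)$ and $\zeta_{\sO}(s)$ factor by factor. Recall that $\sO$ and $\Otilde$ have the same fraction algebra $K$, hence the same set of primes of residue characteristic zero is irrelevant here: both Euler products range over finite-residue-field primes, and the morphism $\pi\colon\tX\to X$ is an isomorphism away from the finite set $Z$. Thus for all but finitely many $\fp$ there is a unique $\fP\mid\fp$ with $\N\fP=\N\fp$, so the corresponding ratio of local factors is $1$; the infinite product on the right-hand side is really a finite product and in particular converges. I would begin by writing
\[
      \frac{\zeta_{\Otilde}(s)}{\zeta_{\sO}(s)} = \prod_\fp \frac{\prod_{\fP\mid\fp}\left(1-\N\fP^{-s}\right)^{-1}}{\left(1-\N\fp^{-s}\right)^{-1}},
\]
which is legitimate as an identity of formal Euler products once one checks that $\{\fP : \fP\mid\fp \text{ for some }\fp\}$ is exactly the set of all finite-residue-field primes of $\Otilde$ — this is immediate since every maximal ideal of $\Otilde$ contracts to a maximal ideal of $\sO$ (as $\Otilde$ is integral over $\sO$) with finite residue field, and conversely primes of $\sO$ with finite residue field lie under such primes.

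Next I would justify manipulating this relation at $s=1$. Both $\zeta_{\sO}(s)$ and $\zeta_{\Otilde}(s)$ have a pole of order exactly $m$ at $s=1$ (stated in Section~\ref{inky}), so the quotient $\zeta_{\Otilde}(s)/\zeta_{\sO}(s)$ extends to a holomorphic, nonvanishing function near $s=1$; its value at $s=1$ is $\lim_{s\to1}\zeta_{\Otilde}(s)/\zeta_{\sO}(s)$. On the other hand, for $\Rep s>1$ the displayed product converges absolutely (being a finite product of continuous nonvanishing functions times a tail that is identically $1$), and each factor is continuous at $s=1$ with finite nonzero value because $\N\fp,\N\fP\ge 2$. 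Therefore I may pass to the limit $s\to1$ inside the product, obtaining
\[
      \lim_{s\to1}\frac{\zeta_{\Otilde}(s)}{\zeta_{\sO}(s)} = \prod_\fp \frac{\prod_{\fP\mid\fp}\left(1-\N\fP^{-1}\right)^{-1}}{\left(1-\N\fp^{-1}\right)^{-1}},
\]
which is the claim.

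The only genuine subtlety — and the step I expect to require the most care — is the interchange of limit and (infinite) product. The cleanest route, which I would take, is to reduce it to a \emph{finite} product: away from the finitely many $\fp\in Z$ the local factor of $\zeta_{\sO}$ at $\fp$ and the product of the local factors of $\zeta_{\Otilde}$ over $\fP\mid\fp$ agree identically in $s$ (since $\pi$ is an isomorphism there), so the quotient Euler product telescopes to $\prod_{\fp\in Z}$ of the indicated ratios. A finite product of functions each holomorphic and nonvanishing at $s=1$ is trivially continuous there, so the limit may be evaluated termwise, and the terms with $\fp\notin Z$ contribute factors of $1$, matching the infinite-product notation on the right-hand side. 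This reduction also sidesteps any worry about whether the individual Euler products $\zeta_{\sO}(s)$ and $\zeta_{\Otilde}(s)$ converge at $s=1$ — they do not — since only their ratio, which is well-behaved, enters.
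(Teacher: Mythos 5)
Your proof is correct and follows essentially the same route as the paper: write the quotient of Euler products as a product over $\fp$ of ratios of local factors, observe that all but finitely many of these ratios (namely those with $\fp \notin Z$) are identically $1$ so the product is really finite, and then evaluate the limit termwise at $s=1$. The paper states this in two lines with a reference to Jenner; your additional care about the bijection of primes above and below, and about interchanging the limit with the (finite) product, fills in exactly the details the paper leaves implicit.
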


\subsection{The discriminants of \texorpdfstring{$\sO$ and $\Otilde$}{O and O-tilde}}

Our definition of $\Disc \sO$ immediately implies the following.

\begin{proposition}
\label{P:discriminants}
We have
$\displaystyle \frac{\Disc \Otilde}{\Disc \sO} = (\Otilde:\sO)^{-2}$.
\end{proposition}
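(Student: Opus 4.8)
The plan is to reduce the global identity to a product of local statements and then invoke the elementary formula for how the discriminant of a lattice changes under an inclusion of finite index. First I would recall the standard fact that if $L \subseteq L'$ are two free $\Z$-modules of rank $n$ inside $K$, with $L'$ of finite index in $L$... wait, here the inclusion is $\sO \subseteq \Otilde$, so $\Otilde$ is the larger module and $\#(\Otilde/\sO)$ is the index. The change-of-basis matrix expressing a $\Z$-basis of $\sO$ in terms of a $\Z$-basis of $\Otilde$ is an integer matrix whose determinant (up to sign) is exactly $\#(\Otilde/\sO)$, by the theory of the Smith normal form. Since $\Disc$ of a lattice is the determinant of the Gram matrix of the trace form $\Tr_{K/\QQ}$ restricted to that lattice, and the Gram matrix transforms by $M^{t} G M$ under a change of basis given by $M$, we get $\Disc \sO = (\det M)^2 \, \Disc \Otilde = (\#(\Otilde/\sO))^2 \, \Disc \Otilde$. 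Rearranging gives the claimed identity.

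The only subtlety worth spelling out is that the trace form $\Tr_{K/\QQ}$ on $K$ is the same bilinear form whether we view it on $\sO$ or on $\Otilde$ — it is intrinsic to the $\QQ$-algebra $K$ — so that the Gram matrices of $\sO$ and $\Otilde$ really are related purely by the lattice change of basis, with no extra contribution. I would also remark that $\Disc \Otilde \neq 0$ (indeed $K$ is étale over $\QQ$, so the trace form is nondegenerate), which is what lets us divide; this is implicit in the statement since the left-hand side is written as a ratio.

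I expect no genuine obstacle here: the result is the exact analogue of the classical conductor–discriminant comparison $\disc(\sO) = [\,\sO_F : \sO\,]^2 \disc(\sO_F)$ for orders in a single number field, and the proof is the same linear algebra. The one thing to be careful about is signs: $\det M$ is only well-defined up to sign (different choices of bases), but $(\det M)^2$ is not, and equals $(\#(\Otilde/\sO))^2$ on the nose, so the squared formula is clean. If the authors want, one could alternatively phrase the proof via localization — $\Disc$ behaves well under $\sO \to \sO_\fp$ and $\#(\Otilde/\sO) = \prod_\fp \#(\Otilde_\fp/\sO_\fp)$ by the decomposition established in Section~\ref{S:normalization} — but the direct global linear-algebra argument is shorter and I would present that.
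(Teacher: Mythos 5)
Your proof is correct and is essentially identical to the paper's: both use the change-of-basis matrix $A$ with $|\det A| = \#(\Otilde/\sO)$ and the transformation rule $G \mapsto A^T G A$ for the Gram matrix of the trace form to get $\Disc \sO = (\det A)^2 \Disc \Otilde$. Your extra remarks about the sign of $\det A$ and the nondegeneracy of the trace form are fine but not needed.
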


\subsection{Local units}

Let $\fc \subseteq \sO$ be the annihilator of the $\sO$-module $\Otilde/\sO$.
Then $\fc$ is also an $\Otilde$-ideal, called the \defi{conductor} of $\sO$.
It is the largest $\Otilde$-ideal contained in $\sO$.

Let $\fp$ be a maximal ideal of $\sO$.
Let $\Otilde_\fp$ be the localization of the $\sO$-algebra $\Otilde$ at $\fp$.
Then $\Otilde_\fp$ is a semilocal ring whose maximal ideals correspond to 
the finitely many maximal ideals $\fP \subset \Otilde$ lying above $\fp$.
Since $\Otilde/\sO$ is finite, 
$\Otilde/\sO \isom \prod_\fp \Otilde_\fp/\sO_\fp$,
and $\Otilde_\fp/\sO_\fp$ is nontrivial for only finitely many $\fp$.
Let $\fc_\fp$ be the localization of $\fc$ at $\fp$.

\begin{lemma}
\label{L:passing to mod c}
The natural map
\[
      \frac{\Otilde_{\fp}^\times}{\sO_{\fp}^\times} \rightarrow \frac{\left( \Otilde_\fp / \fc_\fp \right)^\times}{\left( \sO_\fp / \fc_\fp \right)^\times}
\]
is an isomorphism.
\end{lemma}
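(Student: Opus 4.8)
The plan is to prove the isomorphism at the level of each local ring, then reassemble. First I would fix a maximal ideal $\fp$ of $\sO$ and work entirely in the semilocal ring $\Otilde_\fp$ and the local ring $\sO_\fp$, noting that $\fc_\fp$ is an ideal of $\Otilde_\fp$ contained in $\sO_\fp$ (this uses $\fc \subseteq \sO$). The key structural fact is the one established in Section~\ref{S:normalization}: because $\Otilde_\fp$ is semilocal with each maximal ideal lying over $\fp\sO_\fp$, an element of $\Otilde_\fp$ is a unit if and only if its image in $\sO_\fp/\fp\sO_\fp$ (equivalently, in every residue field of $\Otilde_\fp$) is nonzero; and likewise an element of $\sO_\fp$ is a unit iff its image in $\sO_\fp/\fp\sO_\fp$ is nonzero. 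Since $\fc_\fp \subseteq \fp\sO_\fp$ (as $\fc_\fp$ is a proper ideal contained in $\sO_\fp$, hence in the unique maximal ideal $\fp\sO_\fp$), reduction mod $\fc_\fp$ does not change whether an element is a unit, in either ring.

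From this I would deduce surjectivity and injectivity of the displayed map separately. For surjectivity: given a unit $\bar u \in (\Otilde_\fp/\fc_\fp)^\times$, lift it to any $u \in \Otilde_\fp$; then $u \bmod \fp\sO_\fp$ is nonzero, so by the criterion above $u \in \Otilde_\fp^\times$, and its class maps to $\bar u$ modulo the image of $(\sO_\fp/\fc_\fp)^\times$. For well-definedness and injectivity: the map is induced by $\Otilde_\fp^\times \to (\Otilde_\fp/\fc_\fp)^\times$, which carries $\sO_\fp^\times$ into $(\sO_\fp/\fc_\fp)^\times$, so it descends to the quotients. For injectivity, suppose $u \in \Otilde_\fp^\times$ has image in $(\Otilde_\fp/\fc_\fp)^\times$ lying in the subgroup $(\sO_\fp/\fc_\fp)^\times$; choose $v \in \sO_\fp$ with $v \equiv u \pmod{\fc_\fp}$ and $\bar v \in (\sO_\fp/\fc_\fp)^\times$. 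Then $v \bmod \fp\sO_\fp = u \bmod \fp\sO_\fp \ne 0$, so $v \in \sO_\fp^\times$ by the local unit criterion; and $w := v^{-1}u \in \Otilde_\fp$ satisfies $w \equiv 1 \pmod{\fc_\fp}$, hence $w \equiv 1 \pmod{\fp\sO_\fp}$, so $w \in \Otilde_\fp^\times$ with $w \equiv 1$ mod $\fc_\fp$. It remains to check $w \in \sO_\fp$: since $w - 1 \in \fc_\fp \subseteq \sO_\fp$, indeed $w \in \sO_\fp$, and then $w^{-1} \in \sO_\fp$ too because $w$ is a unit in the local ring $\sO_\fp$ (its reduction mod $\fp\sO_\fp$ is $1 \ne 0$). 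Thus $u = vw \in \sO_\fp^\times$, so its class in $\Otilde_\fp^\times/\sO_\fp^\times$ is trivial.

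I expect the main obstacle to be bookkeeping the two distinct ``unit'' criteria carefully — the one for $\Otilde_\fp$ (semilocal) versus the one for $\sO_\fp$ (local) — and in particular making sure that when I lift an element of $(\sO_\fp/\fc_\fp)^\times$ I really can choose a representative that is a unit in $\sO_\fp$, and that $w \equiv 1 \pmod{\fc_\fp}$ with $w - 1 \in \sO_\fp$ forces $w \in \sO_\fp$. The only subtlety worth flagging is why $\fc_\fp \subseteq \fp\sO_\fp$: $\fc_\fp$ is an ideal of $\Otilde_\fp$ contained in $\sO_\fp$, and any ideal of $\sO_\fp$ other than $\sO_\fp$ itself sits inside the maximal ideal $\fp\sO_\fp$; since $\fc \ne \Otilde$ forces $\fc_\fp \ne \sO_\fp$ (indeed $1 \notin \fc$ because $\fc$ is a proper ideal), we are fine. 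Everything is otherwise a routine diagram/element chase, and no spectral sequences or cohomology are needed here — this lemma is purely commutative-algebraic and feeds into counting the order of $\Directsum_\fp \Otilde_\fp^\times/\sO_\fp^\times$ in~\eqref{E:Neukirch}.
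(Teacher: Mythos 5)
Your argument in the main case is essentially the paper's: lift units through the reduction map modulo $\fc_\fp$ (using that $\fc_\fp$ lies in every maximal ideal of $\Otilde_\fp$, so lifts of units are units), and observe that the kernels on both levels are controlled by $1+\fc_\fp$, which sits inside $\sO_\fp^\times$. Your explicit surjectivity/injectivity chase with $w=v^{-1}u$ is just an unwound version of the paper's one-line ``both surjections have the same kernel $1+\fc_\fp$,'' so there is no real difference of method there.

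There is, however, one genuinely false assertion in your closing paragraph: that ``$\fc\ne\Otilde$ forces $\fc_\fp\ne\sO_\fp$.'' Localization does not preserve properness of ideals: if $\fp$ does not contain $\fc$ (which happens for all but finitely many $\fp$, namely all $\fp$ outside the singular locus), then $\fc_\fp=\sO_\fp=\Otilde_\fp$, even though $1\notin\fc$. Your entire argument rests on $\fc_\fp\subseteq\fp\sO_\fp$, which fails precisely in this case; as written, the proof breaks down at every nonsingular $\fp$. The repair is the paper's Case 1: when $\fc_\fp=\sO_\fp$, one has $1\in\fc_\fp$, hence $\fc_\fp=\Otilde_\fp$ as well, both quotient rings are zero, and both sides of the map are trivial, so the lemma holds vacuously there. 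You should state this dichotomy rather than claim the second case never occurs. A smaller point of hygiene: your ``unit criterion'' speaks of the image of an element of $\Otilde_\fp$ in $\sO_\fp/\fp\sO_\fp$, which is meaningless for elements not in $\sO_\fp$; the correct statement (which you give parenthetically) is that $u\in\Otilde_\fp^\times$ if and only if $u$ lies outside every maximal ideal $\fP$ of the semilocal ring $\Otilde_\fp$, and it is the inclusion $\fc_\fp\subseteq\fp\sO_\fp\subseteq\fP$ that makes the lifting argument work.
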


\begin{proof}
\emph{Case 1: $\fc_\fp = \sO_\fp$.}
Then $1 \in \fc_\fp$, so $\fc_\fp=\Otilde_\fp$ too; thus both sides are trivial.

\emph{Case 2: $\fc_\fp \ne \sO_\fp$.}
Then $\fc_\fp \subseteq \fp \sO_\fp \subset \fP$
for every maximal ideal $\fP$ of $\Otilde_\fp$.
If an element $\bar{a} \in (\Otilde_\fp/\fc_\fp)^\times$
is lifted to an element $a \in \Otilde_\fp$,
then $a$ lies outside each $\fP$, so $a \in \Otilde_\fp^\times$.
Thus $\Otilde_\fp^\times \rightarrow (\Otilde_\fp/\fc_\fp)^\times$
is surjective.
Similarly, $\sO_\fp^\times \rightarrow (\sO_\fp/\fc_\fp)^\times$ is
surjective.  Both surjections have the same kernel $1+\fc_\fp$, so the
result follows.
\end{proof}

\begin{lemma}
\label{L:units of finite rings}
If $\fc_\fp \ne \sO_\fp$, then
\begin{align*}
      \# \left( \Otilde_\fp / \fc_\fp \right)^\times
      &= \# \left( \Otilde_\fp / \fc_\fp \right) \prod_{\fP | \fp} \left( 1-q_\fP^{-1} \right), \\
      \# \left( \sO_\fp / \fc_\fp \right)^\times
      &= \# \left( \sO_\fp / \fc_\fp \right) \left(1-q_\fp^{-1}\right).
\end{align*}
\end{lemma}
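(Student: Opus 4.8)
The plan is to reduce each identity to a counting problem about units in a finite (semi-)local ring, using the structure of $\Otilde_\fp$ and $\sO_\fp$ as products over the maximal ideals lying above $\fp$. For the first identity, since $\Otilde_\fp$ is a finite product of complete (equivalently, finite) local rings $\Otilde_{\fP}$ for $\fP \mid \fp$, and $\fc_\fp$ is an ideal of $\Otilde_\fp$, the quotient $\Otilde_\fp/\fc_\fp$ decomposes as $\prod_{\fP \mid \fp} \Otilde_{\fP}/\fc_{\fP}$. The unit group of a product is the product of unit groups, so I would reduce to showing that for a single finite local ring $R$ with maximal ideal $\fm$ and residue field of size $q$, one has $\#R^\times = \#R \cdot (1 - q^{-1})$. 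But that is immediate: $R^\times = R \setminus \fm$, so $\#R^\times = \#R - \#\fm = \#R(1 - \#\fm/\#R) = \#R(1 - q^{-1})$, since $\#R/\#\fm = \#(R/\fm) = q$. Applying this with $q = \N\fP$ and multiplying over $\fP \mid \fp$ gives the first equation, with the product $\prod_{\fP\mid\fp}(1 - \N\fP^{-1})$ appearing exactly as stated.

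\textbf{Second identity.} For $\sO_\fp/\fc_\fp$ the argument is even more direct, because $\sO_\fp$ is local (not just semilocal): its unique maximal ideal is $\fp\sO_\fp$, with residue field $\sO/\fp$ of size $\N\fp$. Under the hypothesis $\fc_\fp \neq \sO_\fp$ we have $\fc_\fp \subseteq \fp\sO_\fp$, so $\fp\sO_\fp/\fc_\fp$ is the maximal ideal of the finite local ring $\sO_\fp/\fc_\fp$, whose residue field is again $\sO/\fp$. Hence the same ``units are the complement of the maximal ideal'' computation gives $\#(\sO_\fp/\fc_\fp)^\times = \#(\sO_\fp/\fc_\fp)(1 - \N\fp^{-1})$.

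\textbf{The one point requiring care} is the claim that $\Otilde_\fp$ is a finite product of local rings with residue fields of orders $\N\fP$. Here $\Otilde_\fp$ is the localization of the $\sO$-module $\Otilde$ at $\fp$, which is a semilocal ring, finite over the local ring $\sO_\fp$; its maximal ideals are exactly the $\fP\Otilde_\fp$ for $\fP \mid \fp$ (as recalled in Section~\ref{S:normalization}), and $\Otilde_\fp/\fc_\fp$ is Artinian, hence decomposes canonically as the product of its localizations at these maximal ideals. That $\#(\Otilde_\fp/\fP\Otilde_\fp) = \#(\Otilde/\fP) = \N\fP$ follows since localizing at $\fp$ does not change the residue field at $\fP$. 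I anticipate this bookkeeping—rather than any genuine difficulty—to be the main thing to get right; the rest is the elementary identity $\#R^\times = \#R \cdot (1 - 1/\#(R/\fm))$ for a finite local ring $R$.
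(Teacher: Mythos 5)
Your proof is correct and is essentially the paper's argument: the paper counts the units of $\Otilde_\fp/\fc_\fp$ as the elements avoiding each maximal ideal $\fP\Otilde_\fp$, with the multiplicativity of the count supplied by the Chinese remainder theorem --- which is exactly the product-of-local-rings decomposition you use. One small caveat: your opening claim that $\Otilde_\fp$ itself is a finite product of finite local rings is false ($\Otilde_\fp$ is an infinite semilocal ring, e.g.\ a semilocal Dedekind domain when $K$ is a field), but your final paragraph correctly relocates the decomposition to the Artinian quotient $\Otilde_\fp/\fc_\fp$, which is all that is needed.
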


\begin{proof}
The maximal ideals of $\Otilde_\fp / \fc_\fp$ 
are the ideals $\fP \Otilde_\fp / \fc_\fp$ for $\fP | \fp$.
An element of $\Otilde_\fp / \fc_\fp$ is a unit if and only if it lies outside each
maximal ideal.
The probability that a random element of the finite group
$\Otilde_\fp / \fc_\fp$ lies outside $\fP \Otilde_\fp / \fc_\fp$ 
is $1 - q_\fP^{-1}$,
and these events for different $\fP$ are independent 
by the Chinese remainder theorem,
so the first equation follows.
The second equation is similar (but easier).
\end{proof}

\begin{lemma}
\label{L:product of local unit indices}
We have
\[
      \# \prod_\fp \frac{\Otilde_{\fp}^\times}{\sO_{\fp}^\times}
      = \# \frac{\Otilde}{\sO} \cdot \prod_\fp \frac{ \prod_{\fP | \fp} (1-q_\fP^{-1})}{1-q_\fp^{-1}}.
\]
\end{lemma}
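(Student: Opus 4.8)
The plan is to reduce the left-hand side to finite computations modulo $\fc$ via Lemmas~\ref{L:passing to mod c} and~\ref{L:units of finite rings}, and then to recognize the resulting product of orders as $\#(\Otilde/\sO)$ by a localization argument.

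First I would work one prime $\fp$ at a time and establish the local identity
\[
\#\frac{\Otilde_\fp^\times}{\sO_\fp^\times} = \frac{\#(\Otilde_\fp/\fc_\fp)}{\#(\sO_\fp/\fc_\fp)}\cdot\frac{\prod_{\fP\mid\fp}(1-\N\fP^{-1})}{1-\N\fp^{-1}}.
\]
When $\fc_\fp\neq\sO_\fp$, this is immediate: Lemma~\ref{L:passing to mod c} identifies the left side with $\#(\Otilde_\fp/\fc_\fp)^\times / \#(\sO_\fp/\fc_\fp)^\times$, and Lemma~\ref{L:units of finite rings} evaluates each of those two orders. When $\fc_\fp=\sO_\fp$, one has $1\in\fc_\fp$, hence $\fc_\fp=\Otilde_\fp$ as well, so $\sO_\fp=\Otilde_\fp$; then $\Otilde_\fp^\times/\sO_\fp^\times$ is trivial, the first factor on the right is $1$, and since $\Otilde_\fp=\sO_\fp$ is local with unique maximal ideal $\fp\sO_\fp$ the unique $\fP\mid\fp$ satisfies $\N\fP=\N\fp$, so the second factor is $1$ too. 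In all cases both sides equal $1$ for the cofinitely many $\fp\notin Z$.

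Next I would multiply this identity over all $\fp$; since all but finitely many factors are $1$, the rearrangement
\[
\prod_\fp \#\frac{\Otilde_\fp^\times}{\sO_\fp^\times} = \left(\prod_\fp\frac{\#(\Otilde_\fp/\fc_\fp)}{\#(\sO_\fp/\fc_\fp)}\right)\prod_\fp\frac{\prod_{\fP\mid\fp}(1-\N\fP^{-1})}{1-\N\fp^{-1}}
\]
is legitimate. It then remains to identify the first product with $\#(\Otilde/\sO)$. Since $\fc$ is a nonzero ideal of $\Otilde$ and $\Otilde$ is a product of rings of integers, $\Otilde/\fc$ is finite; as $\fc\subseteq\sO\subseteq\Otilde$, the modules $\Otilde/\sO$, $\Otilde/\fc$, and $\sO/\fc$ are all finite. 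Any finite $\sO$-module $M$ is supported at finitely many maximal ideals and decomposes as $M\isom\bigoplus_\fp M_\fp$, so $\#M=\prod_\fp\#M_\fp$; applied to $M=\Otilde/\fc$ and $M=\sO/\fc$ this gives $\prod_\fp\#(\Otilde_\fp/\fc_\fp)=\#(\Otilde/\fc)$ and $\prod_\fp\#(\sO_\fp/\fc_\fp)=\#(\sO/\fc)$. Finally $\#(\Otilde/\fc)=\#(\Otilde/\sO)\cdot\#(\sO/\fc)$, so the first product is $\#(\Otilde/\sO)$, which completes the argument.

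There is no serious obstacle here; the proof is a matter of careful bookkeeping, assembling the previous lemmas together with the standard fact that a finite module over $\sO$ is the direct sum of its localizations. The one place that needs attention is the degenerate case $\fc_\fp=\sO_\fp$, in which Lemma~\ref{L:units of finite rings} does not apply and one must check directly that the local factors on both sides equal $1$ — in particular that there is a unique prime of $\Otilde$ over such a $\fp$ and that it has the same norm.
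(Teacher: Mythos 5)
Your proof is correct and follows essentially the same route as the paper: combine Lemmas~\ref{L:passing to mod c} and~\ref{L:units of finite rings} into a local identity (checking the degenerate case $\fc_\fp=\sO_\fp$ by hand, where both sides are $1$), then globalize using the fact that a finite $\sO$-module is the direct sum of its localizations. The only cosmetic difference is that the paper writes the local index as $\#(\Otilde_\fp/\sO_\fp)$ and invokes $\Otilde/\sO\isom\prod_\fp \Otilde_\fp/\sO_\fp$ directly, whereas you carry the ratio $\#(\Otilde_\fp/\fc_\fp)/\#(\sO_\fp/\fc_\fp)$ through and recover $\#(\Otilde/\sO)$ from $\#(\Otilde/\fc)=\#(\Otilde/\sO)\cdot\#(\sO/\fc)$.
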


\begin{proof}
By Lemmas \ref{L:passing to mod c} and~\ref{L:units of finite rings},
\[
      \# \frac{\Otilde_{\fp}^\times}{\sO_{\fp}^\times}
      = \# \frac{\Otilde_\fp}{\sO_\fp} \cdot \frac{ \prod_{\fP | \fp} (1-q_\fP^{-1})}{1-q_\fp^{-1}};
\]
this holds even if $\fc_\fp = \sO_\fp$ since both sides are $1$ in that case.
Now take the product of both sides
and use the isomorphism of finite groups
\[
      \frac{\Otilde}{\sO} \isom \prod_\fp \frac{\Otilde_\fp}{\sO_\fp}.\qedhere
\]
\end{proof}

\subsection{Using the Leray spectral sequence to relate units and Picard groups for \texorpdfstring{$\sO$ and $\Otilde$}{O and O-tilde}}

Let $\OO_X$ be the structure sheaf of $X$,
and let $\OO_X^\times$ be the sheaf of units of $\OO_X$.
Define $\OO_{\Xtilde}^\times$ similarly.

\begin{lemma}
\label{L:R^1}
The sheaf $R^1\pi_{\ast}\OO_{\Xtilde}^\times$ on $X$ is $0$.
\end{lemma}

\begin{proof}
By \cite{Hartshorne1977}*{Proposition~III.8.1},
its stalk
 $(R^1\pi_{\ast}\OO_{\Xtilde}^\times)_\fp$
 at a closed point $\fp$ of $X$ is $\varinjlim_U \Pic \pi^{-1} U$,
where $U$ ranges over open neighborhoods of $\fp$ in $X$.
Since $\pi^{-1}(\fp)$ is finite, every line bundle on $\pi^{-1} U$
becomes trivial on $\pi^{-1} U'$ 
for some smaller neighborhood $U'$ of $\fp$ in $X$.
Thus $\varinjlim_U \Pic \pi^{-1} U = 0$.
\end{proof}

\begin{lemma}
\label{L:H^1 and Pic}
We have $H^1(X,\pi_{\ast}\OO_{\Xtilde}^{\times}) \isom \Pic \Xtilde$.
\end{lemma}

\begin{proof}
The Leray spectral sequence
\[
      H^p\left(X, R^{q}\pi_{\ast}\scriptF\right)\implies H^{p+q}\left(\Xtilde, \scriptF\right)
\]
with $\scriptF = \OO_{\Xtilde}^\times$ yields an exact sequence
\[
0\To H^1\left(X,\pi_{\ast}\OO_{\Xtilde}^{\times}\right)\To
\Pic \Xtilde\To
H^{0}\left(X,R^1\pi_{\ast}\OO_{\Xtilde}^\times\right) .
\]
Lemma~\ref{L:R^1} above completes the proof.
\end{proof}

\begin{proposition}
\label{P:Neukirch}
The following is an exact sequence of finite groups:
\begin{equation}
\label{E:Neukirch}
	0
	\To \frac{\Otilde^\times}{\sO^\times} 
	\To \Directsum_{\fp} \frac{\Otilde_\fp^\times}{\sO_\fp^\times} 
	\To \Pic \sO 
	\To \Pic \Otilde 
	\To 0.
\end{equation}
\end{proposition}

\begin{proof}
View $\Otilde_{\fp}^\times / \sO_{\fp}^\times$ as a skyscraper sheaf
on $X$ supported at $\fp$.
Then we have an exact sequence of sheaves on $X$
\[
	0 
	\To \OO_X^\times
	\To \pi_{\ast}\OO_{\Xtilde}^\times
	\To \Directsum_\fp \frac{\Otilde_\fp^\times}{\sO_\fp^\times}
	\To 0.
\]
The corresponding long exact sequence in cohomology is
\[
	0
	\To \sO^\times
	\To \Otilde^\times
	\To \Directsum_{\fp} \frac{\Otilde_{\fp}^\times}{\sO_{\fp}^\times} 
	\To \Pic X 
	\To H^1\left(X, \pi_{\ast}\OO_{\Xtilde}^\times\right)
	\To 0.
\]
By definition, $\Pic X = \Pic \sO$.
By Lemma~\ref{L:H^1 and Pic}, the last term is $\Pic \Xtilde = \Pic \Otilde$.

The second term in \eqref{E:Neukirch} is finite 
by Lemma~\ref{L:product of local unit indices}.
Finally, $\Otilde$ is a finite product of rings of $S$-integers,
each of which has finite Picard group,
so $\Pic \Otilde$ is finite.
Thus all four groups in \eqref{E:Neukirch} are finite.
\end{proof}

\begin{remark}
For a more elementary derivation of~\eqref{E:Neukirch},
at least in the case where $\sO$ is an integral domain, 
see \cite{Neukirch1999}*{Proposition~I.12.9}.
\end{remark}

\begin{proposition}
\label{P:family}
We have
\[
      \# \frac{\Otilde^\times}{\sO^\times}
      = \frac{h(\Otilde)}{h(\sO)} \cdot \# \frac{\Otilde}{\sO} \cdot \prod_\fp \frac{ \prod_{\fP | \fp} (1-q_\fP^{-1})}{1-q_\fp^{-1}}.
\]
\end{proposition}

\begin{proof}
Take the alternating product of the orders of the groups in~\eqref{E:Neukirch}
and use Lemma~\ref{L:product of local unit indices}.
\end{proof}

\begin{remark}
Finiteness of $(\Otilde^\times:\sO^\times) < \infty$
can also be viewed as a consequence of the finiteness of $(\Otilde:\sO)$,
by \cite{Bartel-Lenstra2017}*{Theorem~1.3}.
\end{remark}

\subsection{The regulators of \texorpdfstring{$\sO$ and $\Otilde$}{O and O-tilde}}

Let $L = L(\sO)$ be as in Section~\ref{S:logarithmic embedding},
and define $\Ltilde = L(\Otilde)$ similarly.
The group $\Ltilde$ is a full lattice in $\prod \R^{S_i}_0$.
By Proposition~\ref{P:Neukirch}, $(\Otilde^\times:\sO^\times)$ is finite,
so $L$ is a full lattice in $\prod \R^{S_i}_0$ too.

\begin{proposition}
\label{P:regulators}
We have
\[
      \frac{R(\Otilde)}{R(\sO)} \cdot \# \frac{\Otilde^\times}{\sO^\times}
      = \frac{w(\Otilde)}{w(\sO)}.
\]
\end{proposition}

\begin{proof}
Applying the snake lemma to
\[
\xymatrix{
1 \ar[r] & \mu(\sO) \ar[r] \ar@{^{(}->}[d] & \sO^\times \ar[r] \ar@{^{(}->}[d] & L \ar[r] \ar@{^{(}->}[d] & 0 \\
1 \ar[r] & \mu(\Otilde) \ar[r] & \Otilde^\times \ar[r] & \Ltilde \ar[r] & 0 \\
}
\]
yields an exact sequence
\[
      1\To \frac{\mu(\Otilde)}{\mu(\sO)}
      \To \frac{\Otilde^\times}{\sO^\times}
        \To \frac{\Ltilde}{L}\To 0
\]
of finite groups, the last of which has order $R(\sO)/R(\Otilde)$.
\end{proof}

\subsection{Conclusion of the proof}

To complete the proof of Theorem~\ref{T:main},
we compare \eqref{E:sweet potato} for $\Otilde$ 
to \eqref{E:sweet potato} for $\sO$.
The ratio of the left side of \eqref{E:sweet potato} for $\Otilde$
to the left side of \eqref{E:sweet potato} for $\sO$ is
\[
      \lim_{s \rightarrow 1} \frac{\zeta_{\Otilde}(s)}{\zeta_{\sO}(s)}.
\]
The ratio of the right sides is
\[
      \left| \frac{\Disc \Otilde}{\Disc \sO} \right|^{-1/2}
      \left( \frac{w(\Otilde)}{w(\sO)} \right)^{-1} \cdot
      \frac{h(\Otilde)}{h(\sO)} \cdot
      \frac{R(\Otilde)}{R(\sO)}.
\]
By Propositions \ref{P:ratio of zeta}, \ref{P:discriminants}, \ref{P:family}, and~\ref{P:regulators} and the definition of $\Disc \sO$, 
both ratios equal
\[
      \prod_\fp \frac{ \prod_{\fP | \fp} (1-q_\fP^{-1})^{-1}}
			{\left(1-q_\fp^{-1}\right)^{-1}}.
\]

\section{Example 1: a fiber product of rings}
\label{S:example 1}

Let $p$ be an odd prime.
Consider the fiber product of \emph{rings}\footnote{A fiber product of rings does not correspond to a fiber product of schemes.} 
\[
      \sO := \Z[1/2]\times_{\F_p}\F_p[t]
	= \{\, (a,f) \in \Z[1/2] \times \F_p[t] : a \equiv f(0) \!\!\! \pmod{p} \,\}.
\]
Then $\Otilde = \Z[1/2] \times \F_p[t]$ inside $K=\QQ \times \F_p(t)$.
Thus $\Xtilde \colonequals \Spec \Otilde$ 
is the disjoint union of two ``curves''
$\Spec \Z[1/2] \, \amalg \, \Spec \F_p[t]$,
and $X \colonequals \Spec \sO$ 
is the same except that the points $(p) \in \Spec\Z[1/2]$ 
and $(t) \in \Spec \F_p[t]$ are attached.
Define 
\begin{align*}
      \fp &:= \{\, (a,f)\in \Z[1/2]\times\F_p[t] : a\equiv f(0)\equiv 0 \!\! \pmod{p} \,\} \\
      \fP &:= \{\, (a,f)\in \Z[1/2]\times\F_p[t] : a\equiv 0 \!\! \pmod{p} \,\} \\
      \fP' &:= \{\, (a,f)\in \Z[1/2]\times\F_p[t] : f(0)\equiv 0 \!\! \pmod{p} \,\}.
\end{align*}
Then $\fp$ is a prime of $\sO$ (the point of attachment),
and $\fP$ and $\fP'$ are the primes of $\Otilde$ lying above $\fp$.
The conductor of $\sO$ is $\fp$ viewed as an $\Otilde$-ideal.

Propositions \ref{P:example LHS1} and~\ref{P:example RHS1} below
verify Theorem~\ref{T:main} for $\sO$
by computing the two sides of \eqref{E:sweet potato} independently.

\begin{proposition}
\label{P:example LHS1}
We have
\[
      \lim_{s \to 1} (s-1)^2 \zeta_X(s) = \frac{1-p^{-1}}{2 \log p}.
\]
\end{proposition}

\begin{proof}
We have
\begin{align*}
	\lim_{s \to 1} (s-1) \zeta_{\Z}(s) &= 1, \\
	\lim_{s \to 1} (s-1) \zeta_{\F_p[t]}(s) &= 
		\lim_{s \to 1} \frac{s-1}{1-p^{1-s}} = \frac{1}{\log p}.
\end{align*}
Taking zeta functions of 
\[
	X-\{\fp\} = \Xtilde-\{\fP,\fP'\} = \left( \Spec \Z - \{(2),(p)\} \right) \, \amalg \, \left(\Spec \F_p[t] - \{(t)\} \right)
\]
yields
\begin{align*}
	(1-p^{-s}) \, \zeta_X(s) 
		&= (1-2^{-s})(1-p^{-s}) \, \zeta_\Z(s) \cdot (1-p^{-s}) \, \zeta_{\F_p[t]}(s) \\
	(s-1)^2 \, \zeta_X(s) 
		&= (1-2^{-s})(1-p^{-s}) \, \left((s-1) \, \zeta_\Z(s) \right) \left((s-1) \, \zeta_{\F_p[t]}(s)\right) \\
	\lim_{s \to 1} (s-1)^2 \zeta_X(s) 
		&= (1-2^{-1})(1-p^{-1}) \cdot 1 \cdot \frac{1}{\log p} = \frac{1-p^{-1}}{2 \log p}.\qedhere
\end{align*}

\end{proof}

\begin{proposition}
\label{P:example RHS1}
We have
\[
\frac{2^{r_1} \, (2\pi)^{r_ 2}\, h(\sO) \, R(\sO) \prod_{v \in S_{\nonarch}} \left( \left( 1-q_v^{-1} \right)/ \log q_v \right)}
		{w(\sO) \, \lvert \Disc \sO \rvert^{1/2}}
      = \frac{1-p^{-1}}{2\log p}.
\]
\end{proposition}

\begin{proof}
First, $r_1=1$ and $r_2=0$.
The set $S_{\nonarch}$ consists of the place $2$ of $\Q$ 
and the place $\infty$ of $\F_p(t)$.
By definition,
\[
	\Disc \sO 
	= (\Otilde:\sO)^2 \, (\Disc \Q) \, (\Disc \F_p(t))
	= p^2 \cdot 1 \cdot p^{2\cdot 0 -2} =1.
\]
Inside 
$\Otilde^\times = \Z[1/2]^\times \times \F_p^\times = \pm \{2^{n}\}_{n\in\Z} \times \F_p^\times$,
 we have 
\[
      \sO^\times = \{\pm (2^n,\,2^n\bmod p) : n \in \Z \}.
\]
In particular, $\mu(\sO) = \{\pm 1\}$, so $w(\sO) = 2$.
Since $\sO^\times$ and $\Otilde^\times$ agree modulo torsion,
\[
	R(\sO) = R(\Otilde) = R(\Z[1/2]) \, R(\F_p[t]) 
	= (\log 2) \cdot 1 = \log 2.
\]
By Lemma~\ref{L:passing to mod c}, 
\[
	\frac{\Otilde_\fp^\times}{\sO_\fp^\times} 
	\isom \frac{(\Otilde/\fP)^\times \times (\Otilde/\fP')^\times}
		{(\sO/\fp)^\times}
	\isom \frac{\F_p^\times \times \F_p^\times}{\F_p^\times},
\]
in which the denominator $\F_p^\times$ is embedded diagonally
in $\F_p^\times \times \F_p^\times$.
Substituting into~\eqref{E:Neukirch} yields
\[
	1
        \To \frac{\Otilde^\times}{\sO^\times}
	\To \frac{\F_p^\times \times \F_p^\times}{\F_p^\times}
        \To \Pic \sO
        \To \Pic \Otilde
	\To 1.
\]
The subgroup $1 \times \F_p^\times$ of $\Otilde^\times$ surjects onto 
$\dfrac{\F_p^\times \times \F_p^\times}{\F_p^\times}$,
and $\Pic \Otilde = \Pic \Z[1/2] \times \Pic \F_p[t] = \{1\}$,
so $\Pic \sO = \{1\}$.
Thus $h(\sO)=1$.

Substituting all these values shows that the expression to be computed equals
\[
    \frac{2^1 \, (2\pi)^0 \cdot 1 \cdot (\log 2) \cdot ((1 - 2^{-1})/ \log 2) \, ((1-p^{-1})/\log p)}
		{2 \cdot 1^{1/2}}
     = \frac{1-p^{-1}}{2\log p}.\qedhere
\]
\end{proof}

\section{Example 2: a non-maximal order in a real quadratic number field}
\label{S:example 2}

Let $\sO = \Z[\sqrt{d}]$,
where $d$ is an integer such that $d \ge 5$ and $d \equiv 5 \pmod{8}$
(other cases could be handled similarly).
Then $\Otilde = \Z[(1+\sqrt{d})/2]$.
Above the prime ideal $\fp \colonequals (2,1+\sqrt{d})$ of $\sO$ 
with residue field $\F_2$
lies the prime ideal $(2)$ of $\Otilde$ with residue field $\F_4$.
The scheme $X \colonequals \Spec \sO$ is analogous to a nodal curve
for which the two slopes at the node $\fp$ are conjugate in 
the quadratic extension $\F_4$ of $\F_2$.
Let $\tilde{\epsilon}$ be the fundamental unit of $\Otilde^\times$,
so $\tilde{\epsilon}>1$ for the standard real embedding $\Otilde \injects \R$.
Let $n$ be the order of the image of $\tilde{\epsilon}$ 
in $(\Otilde/(2))^\times \isom \F_4^\times$, so $n$ is $1$ or $3$.
Since $\sO$ is the preimage of $\F_2$ under $\Otilde \surjects \F_4$,
the element $\epsilon \colonequals \tilde{\epsilon}^n$ 
is the smallest power of $\tilde{\epsilon}$ lying in $\sO^\times$.

Propositions \ref{P:zeta in example 2} and~\ref{P:RHS in example 2} below
verify Theorem~\ref{T:main} for $\sO$
by computing the two sides of \eqref{E:sweet potato} independently.

\begin{proposition}
\label{P:zeta in example 2}
We have
\[
      \lim_{s \to 1} (s-1)^2 \zeta_{\sO}(s) = \frac{3 \, h(\Otilde) \, \log \tilde{\epsilon}}{2 \sqrt{d}}.
\]
\end{proposition}

\begin{proof}
The classical analytic class number formula for $\Otilde$, 
with $r_1=1$, $r_2=0$, $R(\Otilde) = \log \tilde{\epsilon}$,
$w(\Otilde)=2$, $\Disc \Otilde = d$, yields
\[
	\lim_{s \to 1} (s-1) \zeta_{\Otilde}(s) = \frac{h(\Otilde) \, \log \tilde{\epsilon}}{\sqrt{d}}.
\]
On the other hand, Proposition~\ref{P:ratio of zeta}
with $q_{\fp}=2$ and $q_{\fP}=4$ yields
\[
	\lim_{s \rightarrow 1} \frac{\zeta_{\Otilde}(s)}{\zeta_{\sO}(s)} = \frac{(1-1/4)^{-1}}{(1-1/2)^{-1}} = \frac{2}{3}.
\]
Dividing the first equation by the second gives the result.
\end{proof}

\begin{proposition}
\label{P:RHS in example 2}
We have
\[
\frac{2^{r_1} \, (2\pi)^{r_ 2}\, h(\sO) \, R(\sO) \prod_{v \in S_{\nonarch}} \left( \left( 1-q_v^{-1} \right)/ \log q_v \right)}
		{w(\sO) \, \lvert \Disc \sO \rvert^{1/2}}
      = \frac{3 \, h(\Otilde) \, \log \tilde{\epsilon}}{2 \sqrt{d}}.
\]
\end{proposition}

\begin{proof}
First, $r_1=1$, $r_2=0$, and $w(\sO)=2$.
By definition, $S_{\nonarch} = \emptyset$.
By Proposition~\ref{P:discriminants}, 
$\Disc \sO = (\Otilde:\sO)^2 \, \Disc \Otilde = 4d$,
and $R(\sO) = \log \epsilon = n \log \tilde{\epsilon}$.
The exact sequence~\eqref{E:Neukirch} is
\[
	1
        \To \frac{\Otilde^\times}{\sO^\times}
	\To \frac{\F_4^\times}{\F_2^\times}
        \To \Pic \sO
        \To \Pic \Otilde
	\To 1,
\]
so $h(\sO) = (3/n) \, h(\Otilde)$.
Thus the expression to be computed equals
\[
    \frac{2^1 \, (2\pi)^0 \cdot (3/n) \, h(\Otilde) \cdot n \log \tilde{\epsilon}}
		{2 \cdot (4d)^{1/2}}
     = \frac{3 \, h(\Otilde) \, \log \tilde{\epsilon}}{2 \sqrt{d}}.\qedhere
\]
\end{proof}

\section*{Acknowledgments} 
It is a pleasure to thank Tony Scholl for helpful discussions. 
We thank Carlos J. Moreno for bringing 
the article \cite{Stoehr1998} to our attention,
and John Voight for suggesting the reference \cite{Weil1982}.
Finally, we thank the referee for several excellent suggestions.

\begin{bibdiv}
\begin{biblist}


\bib{Bartel-Lenstra2017}{article}{
   author={Bartel, Alex},
   author={Lenstra, Hendrik W., Jr.},
   title={Commensurability of automorphism groups},
   journal={Compos. Math.},
   volume={153},
   date={2017},
   number={2},
   pages={323--346},
   issn={0010-437X},
   review={\MR{3705226}},
   doi={10.1112/S0010437X1600823X},
}

\bib{Cassels1967}{article}{
   author={Cassels, J. W. S.},
   title={Global fields},
   conference={
      title={Algebraic Number Theory (Proc. Instructional Conf., Brighton,
      1965)},
   },
   book={
      publisher={Thompson, Washington, D.C.},
   },
   date={1967},
   pages={42--84},
   review={\MR{0222054}},
}

\bib{Deitmar-Echterhoff2014}{book}{
   author={Deitmar, Anton},
   author={Echterhoff, Siegfried},
   title={Principles of harmonic analysis},
   series={Universitext},
   edition={2},
   publisher={Springer, Cham},
   date={2014},
   pages={xiv+332},
   isbn={978-3-319-05791-0},
   isbn={978-3-319-05792-7},
   review={\MR{3289059}},
   doi={10.1007/978-3-319-05792-7},
}

\bib{Deligne1973-L}{article}{
   author={Deligne, P.},
   title={Les constantes des \'equations fonctionnelles des fonctions $L$},
   language={French},
   conference={
      title={Modular functions of one variable, II},
      address={Proc. Internat. Summer School, Univ. Antwerp, Antwerp},
      date={1972},
   },
   book={
      series={Lecture Notes in Math.},
      volume={349},
      publisher={Springer, Berlin},
   },
   date={1973},
   pages={501--597}, 
   review={\MR{0349635 (50 \#2128)}},
}

\bib{Deninger1991}{article}{
   author={Deninger, Christopher},
   title={On the $\Gamma$-factors attached to motives},
   journal={Invent. Math.},
   volume={104},
   date={1991},
   number={2},
   pages={245--261},
   issn={0020-9910},
   review={\MR{1098609}},
   doi={10.1007/BF01245075},
}

\bib{Dirichlet1894}{book}{
  author={Dirichlet, P. G. Lejeune},
  title={Vorlesungen \"uber Zahlentheorie},
  edition={4},
  publisher={Braunschweig},
  date={1894},
  pages={XVII+657},
  note={Edited by and with supplements by R.~Dedekind},
}

\bib{Galkin1973}{article}{
   author={Galkin, V. M.},
   title={Zeta-functions of certain one-dimensional rings},
   language={Russian},
   journal={Izv. Akad. Nauk SSSR Ser. Mat.},
   volume={37},
   date={1973},
   pages={3--19},
   issn={0373-2436},
   review={\MR{0332729}},
}

\bib{Green1989}{article}{
   author={Green, Barry},
   title={Functional equations for zeta functions of non-Gorenstein orders
   in global fields},
   journal={Manuscripta Math.},
   volume={64},
   date={1989},
   number={4},
   pages={485--502},
   issn={0025-2611},
   review={\MR{1005249}},
   doi={10.1007/BF01170941},
}

\bib{EGA-IV.III}{article}{
   author={Grothendieck, A.},
   title={\'El\'ements de g\'eom\'etrie alg\'ebrique. IV. \'Etude locale des
   sch\'emas et des morphismes de sch\'emas. III},
   journal={Inst. Hautes \'Etudes Sci. Publ. Math.},
   number={28},
   date={1966},
   issn={0073-8301},
   review={\MR{0217086 (36 \#178)}},
   label={EGA~$\hbox{IV}_3$}, 
   note={Written in collaboration with J.~Dieudonn\'e}, 
}

\bib{Hartshorne1977}{book}{
  author={Hartshorne, Robin},
  title={Algebraic geometry},
  note={Graduate Texts in Mathematics, No. 52},
  publisher={Springer-Verlag},
  place={New York},
  date={1977},
  pages={xvi+496},
  isbn={0-387-90244-9},
  review={\MR {0463157 (57 \#3116)}},
}

\bib{Hecke1917}{article}{
   author={Hecke, E.},
   title={\"Uber die Zeta\-funktion beliebiger algebraischer Zahl\-k\"orper},
   journal={Nachr.\ d.\ K.\ Gesellschaft d.\ Wiss.\ zu G\"ottingen, Math.-Phys.\ Kl.},
   date={1917},
   number={1},
   pages={77--89},
}

\bib{Hilbert1897}{article}{
  author={Hilbert, David},
  title={Die Theorie der algebraische Zahlk\"orper},
  journal={Jahres\-bericht der Deutschen Mathe\-ma\-ti\-ker-Vereini\-gung},
  volume={4},
  date={1897},
  pages={175--546},
  translation={ author={Hilbert, David}, title={The theory of algebraic number fields}, note={Translated from the German and with a preface by Iain T. Adamson; With an introduction by Franz Lemmermeyer and Norbert Schappacher}, publisher={Springer-Verlag, Berlin}, date={1998}, pages={xxxvi+350}, isbn={3-540-62779-0}, review={\MR {1646901 (99j:01027)}}, },
}

\bib{Jenner1969}{article}{
  author={Jenner, W. E.},
  title={On zeta functions of number fields},
  journal={Duke Math. J.},
  volume={36},
  date={1969},
  pages={669--671},
  issn={0012-7094},
  review={\MR {0249394}},
}

\bib{Neukirch1999}{book}{
  author={Neukirch, J{\"u}rgen},
  title={Algebraic number theory},
  series={Grundlehren der Mathematischen Wissenschaften [Fundamental Principles of Mathematical Sciences]},
  volume={322},
  note={Translated from the 1992 German original and with a note by Norbert Schappacher; With a foreword by G. Harder},
  publisher={Springer-Verlag},
  place={Berlin},
  date={1999},
  pages={xviii+571},
  isbn={3-540-65399-6},
  review={\MR {1697859 (2000m:11104)}},
}

\bib{Schmidt1931}{article}{
   author={Schmidt, Friedrich Karl},
   title={Analytische Zahlentheorie in K\"{o}rpern der Charakteristik $p$},
   language={German},
   journal={Math. Z.},
   volume={33},
   date={1931},
   number={1},
   pages={1--32},
   issn={0025-5874},
   review={\MR{1545199}},
   doi={10.1007/BF01174341},
}

\bib{Serre1965}{article}{
  author={Serre, Jean-Pierre},
  title={Zeta and $L$ functions},
  conference={ title={Arithmetical Algebraic Geometry}, address={Proc. Conf. Purdue Univ.}, date={1963}, },
  book={ publisher={Harper \& Row}, place={New York}, },
  date={1965},
  pages={82--92},
  review={\MR {0194396 (33 \#2606)}},
}

\bib{Stoehr1998}{article}{
   author={St{\"o}hr, Karl-Otto},
   title={Local and global zeta-functions of singular algebraic curves},
   journal={J. Number Theory},
   volume={71},
   date={1998},
   number={2},
   pages={172--202},
   issn={0022-314X},
   review={\MR{1633801}},
   doi={10.1006/jnth.1998.2240},
}

\bib{Tate1967-thesis}{article}{
  author={Tate, J. T.},
  title={Fourier analysis in number fields, and Hecke's zeta-functions},
  conference={ title={Algebraic Number Theory (Proc. Instructional Conf., Brighton, 1965)}, },
  book={ publisher={Thompson, Washington, D.C.}, },
  date={1967},
  pages={305--347},
  review={\MR {0217026}},
}

\bib{Weil1967}{book}{
   author={Weil, Andr{\'e}},
   title={Basic number theory},
   series={Die Grundlehren der mathematischen Wissenschaften, Band 144},
   publisher={Springer-Verlag New York, Inc., New York},
   date={1967},
   pages={xviii+294},
   review={\MR{0234930 (38 \#3244)}},
}

\bib{Weil1982}{book}{
   author={Weil, Andr\'{e}},
   title={Adeles and algebraic groups},
   series={Progress in Mathematics},
   volume={23},
   note={With appendices by M. Demazure and Takashi Ono},
   publisher={Birkh\"{a}user, Boston, Mass.},
   date={1982},
   pages={iii+126},
   isbn={3-7643-3092-9},
   review={\MR{670072}},
}

\bib{Zuniga-Galindo1997a}{article}{
   author={Z{\'u}{\~n}iga Galindo, W. A.},
   title={Zeta functions and Cartier divisors on singular curves over finite
   fields},
   journal={Manuscripta Math.},
   volume={94},
   date={1997},
   number={1},
   pages={75--88},
   issn={0025-2611},
   review={\MR{1468935}},
   doi={10.1007/BF02677839},
}

\bib{Zuniga-Galindo1997b}{article}{
   author={Z{\'u}{\~n}iga-Galindo, W. A.},
   title={Zeta functions of singular curves over finite fields},
   journal={Rev. Colombiana Mat.},
   volume={31},
   date={1997},
   number={2},
   pages={115--124},
   issn={0034-7426},
   review={\MR{1667594}},
}		

\end{biblist}
\end{bibdiv}

\end{document}